\def\bct{\begin{center}}
\def\ect{\end{center}}
\def\beg{\begin}
\def\<{\langle}
\def\>{\rangle}
\def\mbb{\mathbb}
\def\mbbcp{\mathbb{CP}}
\def\mbbr{\mathbb R}
\def\mbbz{\mathbb Z}
\def\ni{\noindent}
\def\ol{\overline}
\def\tcb{\textcolor{blue}}
\def\tn{\textnormal}
\newtheorem{thm}{Theorem}[section]
\title{A family of cohomological 
complex projective spaces}
\author{Mustafa Kalafat}
\begin{document}
\maketitle


\begin{abstract}
We exhibit 
a family of complex manifolds, which has a member at each odd complex dimension
and which has the same cohomology groups 
as 
the complex projective space at that dimension, 
but not homotopy equivalent to it. 
We also analyze the even dimensional analogue.

\vspace{.05in}
\ni {\em Keywords:} Grassmannians; Algebraic Topology; Serre spectral sequence. 

\vspace{.05in}

\ni {\em Mathematics Subject Classification 2010:} Primary 53C25; Secondary  53C55.

\end{abstract}

\section{Introduction}\label{secintro}

This paper is about various examples of 
complex manifolds. 
Conventionally we define the complex degree $2$ hypersurface in the n-dimensional complex projective space by 
$$\mbb V_2:=\{[Z_0:\cdots :Z_n]\,|\, Z_0^2+\cdots+Z_n^2=0\}\subset \mbbcp_n.$$ 

 Our main result is as follows. 
\begin{thm} The algebraic variety $\mbb V_2\subset\mbbcp_{2k}$ has the
same cohomology groups 
as $\mbbcp_{2k-1}$ but it is not homotopy equivalent to it for $k>1$. 
\end{thm}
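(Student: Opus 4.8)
The plan is to prove both assertions at once by computing the full integral cohomology ring of $\mbb V_2$ and then locating the difference from $\mbbcp_{2k-1}$ entirely in the \emph{ring} structure. First I would record the geometry: the quadratic form $Z_0^2+\cdots+Z_{2k}^2$ is nondegenerate, so $\mbb V_2$ is a smooth hypersurface of complex dimension $2k-1$ (real dimension $4k-2$), in particular a closed oriented manifold with fundamental class $[\mbb V_2]$. Write $\iota\colon \mbb V_2\hookrightarrow\mbbcp_{2k}$ for the inclusion, $H\in H^2(\mbbcp_{2k};\mbbz)$ for the hyperplane generator, and set $h:=\iota^*H$.

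For the groups, I would combine the Lefschetz hyperplane theorem with an affine paving. Since $\mbb V_2$ is a smooth hypersurface of complex dimension $2k-1\ge 3$ (this is exactly where $k>1$ enters), restriction $H^i(\mbbcp_{2k};\mbbz)\to H^i(\mbb V_2;\mbbz)$ is an isomorphism for $i\le 2k-2$ and injective for $i=2k-1$; this gives $H^{2i}(\mbb V_2;\mbbz)\cong\mbbz\langle h^i\rangle$ for $0\le i\le k-1$, and Poincar\'e duality fills in the even groups above the middle. The only delicate point is the real middle degree $2k-1$, which is odd: here I would invoke the classical Bruhat/Schubert paving of the quadric (a rational homogeneous variety, hence paved by affine cells), which for \emph{odd} complex dimension has exactly one cell in each complex dimension $0,1,\dots,2k-1$ and none giving odd cohomology. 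Thus $H^{2i}(\mbb V_2;\mbbz)\cong\mbbz$ for $0\le i\le 2k-1$ and $H^{\mathrm{odd}}=0$, exactly the groups of $\mbbcp_{2k-1}$.

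For non-equivalence, a homotopy equivalence would induce an isomorphism of graded rings $\varphi\colon H^*(\mbb V_2;\mbbz)\to H^*(\mbbcp_{2k-1};\mbbz)$, and I would derive a contradiction from the top power of the degree-$2$ generator. By the Lefschetz isomorphism in degree $2$, $h$ generates $H^2(\mbb V_2;\mbbz)\cong\mbbz$; and since $\mbb V_2$ represents the class $2H$, the projection formula gives
\[
\langle h^{2k-1},[\mbb V_2]\rangle=\langle H^{2k-1},\iota_*[\mbb V_2]\rangle=2\,\langle H^{2k-1},[\mbbcp_{2k-1}]\rangle=2 ,
\]
so $h^{2k-1}=2g$ with $g$ a generator of $H^{4k-2}(\mbb V_2;\mbbz)\cong\mbbz$. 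Let $x$ generate $H^2(\mbbcp_{2k-1};\mbbz)$, so $x^{2k-1}$ generates the top group. As $h$ and $x$ generate the respective degree-$2$ groups, necessarily $\varphi(h)=\pm x$; applying $\varphi$ to $h^{2k-1}=2g$ yields $\pm x^{2k-1}=2\,\varphi(g)$ with $\varphi(g)$ again a generator of the top group, i.e. $\pm(\text{generator})=2\cdot(\text{generator})$ in $\mbbz$, which is impossible. Hence $\mbb V_2$ and $\mbbcp_{2k-1}$ are not homotopy equivalent.

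The main obstacle is the ring-level input rather than the group count: one must confirm both that $h$ is a \emph{primitive} generator of $H^2$ (so a ring isomorphism has no room to absorb the factor $2$) and that $h^{2k-1}$ is exactly \emph{twice} the top generator, which the computation above supplies via the degree of the quadric. I would also stress that this distinction is invisible rationally: over $\mbbr$ one has $H^*(\mbb V_2;\mbbr)\cong\mbbr[h]/(h^{2k})\cong H^*(\mbbcp_{2k-1};\mbbr)$ after rescaling $h$, so the integrality of the generator is essential and no purely rational invariant can separate the two spaces.
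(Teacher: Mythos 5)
Your proof is correct, but it takes a genuinely different route from the paper's. The paper first identifies $\mbb V_2$ with the oriented Grassmannian $G_2^+\mbbr^{2k+1}$ and runs the cohomological Serre spectral sequence of the circle bundle $\mbb S^1\to V_2\mbbr^{2k+1}\to G_2^+\mbbr^{2k+1}$ to get the groups (and in fact the full ring $\mbbz[x_2,x_{2k}]/\langle x_2^k-2x_{2k},x_{2k}^2\rangle$); it then deliberately sidesteps the ring comparison, remarking that it ``might not be very easy to show rigorously,'' and instead distinguishes the spaces by homotopy groups, using the $(2k-2)$-connectivity of the Stiefel manifold to get $\pi_{2k-1}(G_2^+\mbbr^{2k+1})=\mbbz_2$ while the Hopf fibration gives $\pi_{2k-1}(\mbbcp_{2k-1})=\pi_{2k-1}(\mbb S^{4k-1})=0$. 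You stay entirely on the algebro-geometric side: Lefschetz plus the affine paving of the odd quadric for the groups, and the degree-two relation $h^{2k-1}=2g$ (via $\iota_*[\mbb V_2]=2[\mbbcp_{2k-1}]$ and the projection formula) played against the primitivity of $h$ in $H^2$ for the non-equivalence. Your ring argument is precisely the rigorous version of the comparison the paper avoids, and it is consistent with the paper's relation $x_2^k=2x_{2k}$, which forces $x_2^{2k-1}$ to be twice the top generator. What each approach buys: yours proves the stronger statement that the cohomology \emph{rings} are non-isomorphic, uses no spectral sequences, and makes transparent where $k>1$ enters (Lefschetz must reach degree $2$); the paper's yields the low-dimensional homotopy groups of the Grassmannian as a by-product and sets up the machinery reused in the even-dimensional case. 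One point to keep emphasized in your write-up: Poincar\'e duality and Lefschetz alone do not settle the odd middle degree $2k-1$, so the Schubert cell decomposition of the odd quadric (one cell in each complex dimension) is doing genuine work there and is not a dispensable remark.
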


\ni The idea of the proof is to work on the underlying 
smooth structure of this hypersurface. One can show that \cite{redbook} this complex manifold is the set of totally isotropic complex 2-dimensional subspaces of 
$\mbb C^{n+1}$ and 
indeed diffeomorphic to the oriented Grassmannian $G_2^+\mbbr^{n+1}$, the space of oriented 2-planes in real n+1-space. In the lower dimension for k=1, 
$\mbb V_2$ is the projective line and $G_2^+\mbbr^3$ is a topological sphere, so 
all these spaces are the same. However if one goes further in the upper dimensions the spaces start diverging from each other. 
To be able o distinguish, furthermore using Serre spectral sequence techniques one can 
compute the 
cohomology ring of the Grassmannian as follows.

\noindent {\bf Theorem \ref{thmoddcuptruncated}.} 
{\em 
The cohomology ring of the Grassmann manifold is the following truncated polynomial ring for which $deg x_m=m$. 
\[ H^*( G_2^+\mbbr^{2k+1};\mbbz) \approx  
\mbbz[x_2,x_{2k}]/ \langle x_2^k-2x_{2k}, x_{2k}^2\rangle. \]}   
\ni One can compare the cohomology rings to distinguish this space with 
the projective space or one can use homotopy groups. We give the details 
in the following. To give a complete discussion we also give the cohomology ring in the even case as follows.

\noindent {\bf Theorem \ref{thmevencuptruncated}.} 
{\em The cohomology ring of the Grassmann manifold is the following truncated polynomial ring for which $deg x_m=m$. 
\[\hspace{-1cm}
H^*( G_2^+\mbbr^{2k};\mbbz) \approx  
{\mbbz[x_2,x_{2k-2},y_{2k-2},x_{2k}] \tn{\Large $/$} \hspace{-1.5mm} \left\langle 
\begin{array}{c} 
x_{2k-2}+y_{2k-2}+(-1)^k x_2^{k-1}~,~ 
2x_{2k-2}^2-x_2^{2k-2} \\ [1.5\jot] 
2x_2x_{2k-2}+(-1)^k x_2^k~,~2y_{2k-2}^2 - x_2^{2k-1}\\ [1.5\jot]
x_{2k-2}y_{2k-2}~,~x_{2k}^2 ~,~ x_2^{2k-1}
\end{array}  \right\rangle  }. \]
}

There has been a growing interest in Grassmann manifolds because of their 
role in analysing submanifolds of smooth manifolds or Riemannian manifolds 
with special or exceptional holonomy. This is another reason that makes 
our work interesting.  Interested reader may consult to \cite{atg2}, \cite{coassf} and \cite{slag} for more information. 

This paper is organized as follows. In section \S\ref{secmain} we deal with the case n=7, in section \S\ref{secodd} we deal with the odd and in section \S\ref{seceven} with the even dimensional analogues. 

\noindent\textbf{Acknowledgements.} 
The  author would like to thank to his father and family for their support during the preparation of this paper. 
This work is partially supported by the grant $\sharp$114F320 of T\"ubitak \footnote{Turkish science and research council.}.

\newpage
\section{Typical Case}\label{secmain}					

In order to work on the Grassmannian we need the setting of a cohomological Serre spectral sequence of a fiber bundle. The reader may consult to \cite{hatcherss} or \cite{hajimesato} for the fundamental tools in spectral sequences.  
We will be working on the following fiber bundle  
\begin{equation} 
\mbb S^1 \rightarrow V_2\mbbr^7 \longrightarrow G_2^+\mbbr^7 \label{G2R7fibration}
\end{equation}
\ni obtained by sending an ordered orthonormal $2$-frame of $\mbbr^7$ to the oriented $2$-plane spanned by it. One could 
work with any ordered $2$-frames as well, which constitutes a space that deformation retracts to our $V_2\mbbr^7$. Having a simply connected cell complex as a base is crucial as one can show in (\ref{homotopyG2R7}).  
Since we want to understand the cup product structures we are interested in cohomology. In this case the Serre spectral sequence  
is defined as 
\begin{equation*}E^{p,q}_2:= H^p( G ; H^q(\mbb S^1;\mbbz) )\end{equation*}	 
\ni which is guaranteed to converge to the following limit
\beg{equation*}E^{p,q}_\infty \approx F^{p,q}/F^{p+1,q-1} 
\end{equation*}
\ni where $F^{p,q}$ is a  filtration of abelian groups satisfying, 
\begin{equation*} H^n(V;\mbbz) =F^{0,n}\supset F^{1,n-1}\supset \cdots \supset
F^{n+1,-1}=0. \end{equation*}


	
\ni We have a 
product $E_{n}^{p,q}\otimes E_{n}^{p^{\prime },q^{\prime }}\rightarrow
	E_{n}^{p+p^{\prime },q+q^{\prime }}$ on the pages of the spectral sequence satisfying a Leibniz rule, 	
\beg{equation}d_n(ab) =(d_n a)b + (-1)^{p+q}ad_n b~~\tn{for}~~ a\in E_n^{p,q},  b\in E_n^{p^\prime,q^\prime}.\label{leibnizofsss}\end{equation}
\ni On the second page this product, 
$$\hspace{-13mm}E_2^{p,q}( \approx H^p( B;H^q( F;\mbbz) )) \otimes E_{2}^{p^{\prime },q^\prime}( \approx H^{p^\prime}( B;H^{q^\prime}( F;\mbbz) ) )
\rightarrow E_2^{p+p^\prime,q+q^\prime } ( \approx H^{p+p^\prime}( B;H^{q+q^\prime}( F;\mbbz)) ) $$			
\ni equals $\left( -1\right) ^{qp^{\prime }}$ times the cup product of the base space $B$, whose coefficient product is the cup product of the fiber $F.$ 
The product for $E_{n+1}$ is derived from the product for $E_{n}$, 
and the product for $E_{\infty }$ is derived from the cup product for $E.$

The homology groups of various Grassmann manifolds are 
computed in \cite{atg2} using the Serre spectral sequence. 
We can reinterpret one of the results in there through Poincar\'e duality 
and write the table for the cohomology of the Grassmann manifold $G_2^+  
\mbbr^7$ of oriented 2-planes in real 7-space as follows, 		
\begin{align}
H^\ast(G_2^+\mbbr^7;\mbbz)=(\mbbz,0,\mbbz,0,\mbbz,0,\mbbz,0,\mbbz,0,\mbbz). \label{cohomology of G2R7} \end{align}

\ni Next, we will set up the cohomological Serre spectral sequence of the fiber 
bundle (\ref{G2R7fibration}). Imposing the definition and using 
(\ref{cohomology of G2R7}) we obtain the second page as illustrated in the Table 
\ref{table:E2CohomG27}.


\beg{table}[ht]
\caption{ {\em The second page of the cohomological Serre spectral sequence for $G_2^+\mbb R^7$.}}
\bct{
$\begin{array}{cc|c|c|c|c|c|c|c|c|c|c|c|} \cline{3-13}
    &1&\mbbz a&0&\mbbz ax_2&0&\mbbz ax_4&0&\mbbz ax_6&0&\mbbz ax_8&0&\mbbz ax_{10} \\ \cline{3-13}
E_2~~~ &0&\mbbz 1&0&\mbbz x_2&0&\mbbz  x_4&0&\mbbz x_6&0&\mbbz x_8&0&\mbbz x_{10}\\ \cline{3-13}
& \multicolumn{1}{c}{ } & \multicolumn{1}{c}{0} 
& \multicolumn{1}{c}{1} & \multicolumn{1}{c}{2} 
& \multicolumn{1}{c}{3} & \multicolumn{1}{c}{4}
& \multicolumn{1}{c}{5} & \multicolumn{1}{c}{6} 
& \multicolumn{1}{c}{7} & \multicolumn{1}{c}{8}
& \multicolumn{1}{c}{9} & \multicolumn{1}{@{}c@{}}{10} \\  
\end{array}$   
\setlength{\unitlength}{1mm}
\begin{picture}(0,0)
\put(-98,5.5){\vector(2,-1){8}}
\end{picture} }
\ect\label{table:E2CohomG27}\end{table}
\ni	To be able to figure out the cup product structure, we have to find the limit of this spectral sequence. For this reason, we need information on the total space of the fiber bundle. The cohomology of the 11-dimensional Stiefel manifold $V_2\mathbb{R}^{7}$ is well known (see \cite{hatcherat} for a reference) and is given by, 
\begin{equation*} H^{\ast}(V_2 \mbbr^7;\mbbz)
\approx\mbbz[x_{11}]/\langle x_{11}^2\rangle\oplus\mbbz_2[x_6]/\langle x_6^2\rangle ~~~\tn{where}~~~ {deg}\,x_k 
=k. \label{V2R7cohomologyring}\end{equation*}
		
\ni Since the total space at the levels 1,2,3,4,5 and 
7,8,9,10 has vanishing cohomology, the limiting diagonals at these levels are totally zero. Let us figure out the remaining diagonals. 
The term $E_\infty^{5,1}=0$ automatically since it is the limit of $E_2^{5,1}=H^5(G;\mbbz)=0$. Since, 
$$\mbbz_2=H^6(V;\mbbz)=F^{0,6}\supset F^{1,5}\supset\cdots\supset F^{5,1}\supset
F^{6,0}\supset 0$$
\ni and $E_\infty^{0,6}=\cdots=E_\infty^{4,2}=0$ from the second page 
yields $\mbbz_2=F^{1,5}=\cdots=F^{6,0}$. Then we can compute the limit 
$E_\infty^{6,0}=F^{6,0}/F^{7,-1}=\mbbz_2.$

\ni Next, in a similar fashion starting with 
$\mbbz=H^{11}(V;\mbbz)=F^{0,11}$ 
and using $E_\infty^{0,11}=\cdots=E_\infty^{9,2}=0$ from the second page 
we get $\mbbz=F^{1,10}=\cdots=F^{10,1}$. 
Again, from the second page we have $E_\infty^{11,0}=0$. 
Finally, we can compute the limit 
$E_\infty^{10,1}=F^{10,1}/F^{11,0}=\mbbz.$ 
\ni Combining these results we obtain Table \ref{table:EinftyCohomG27} as the 
limit of the spectral sequence. 
\beg{table}[ht]
\caption{{\em The limiting page of cohomological Serre spectral sequence for $G_2^+\mbb R^7$.}}
\bct{
$\begin{array}{cc|c|c|c|c|@{}c@{}|@{}c@{}|c|c|c|c|c|c} \cline{3-13}
           &1&0&0&0&0&0
           &\tcb 0&0&0&0&0&\tcb\mbbz \\ \cline{3-13}
E_\infty~~~&0&\mbbz&0&0&0&   0   &0
&\tcb{\mbbz_2}&0&0&0&0 & \tcb 0 \\ \cline{3-13}
& \multicolumn{1}{c}{ } & \multicolumn{1}{c}{0} 
& \multicolumn{1}{c}{1} & \multicolumn{1}{c}{2} 
& \multicolumn{1}{c}{3} & \multicolumn{1}{c}{4}
& \multicolumn{1}{c}{5} & \multicolumn{1}{c}{6} 
& \multicolumn{1}{c}{7} & \multicolumn{1}{c}{8}
& \multicolumn{1}{c}{9} & \multicolumn{1}{@{}c@{}}{10} \\  
\end{array}$   }\ect \label{table:EinftyCohomG27}  \end{table}
\ni Now we are ready to compute the ring structure of the cohomology. 
Let the symbols $a$ and $x_k$ denote generators of the groups $E_2^{0,1}$ and 
$E_2^{k,0}$ all of which are isomorphic copies of $\mbbz$ as shown in Table \ref{table:E2CohomG27}. The generators on the upper row are $a$ times the generators of the lower row since the product $E_2^{0,q}\times E_2^{r,s}\to E_2^{r,s+q}$ is nothing but multiplication of coefficients. 
Realize that the differentials $d_2^{0,1},d_2^{2,1}$ and $d_2^{6,1},d_2^{8,1}$ 
are isomorphisms. 
So if we rearrange the sign of $a$ or $x_2$ to make the relation $d_2a=x_2$ hold, 	
we can use the Leibniz rule and rearrange the sign of $x_4$ to make the following relation hold, 
\begin{align}x_4=d_2(ax_2)=(d_2a)x_2\pm a(d_2x_2)=(d_2a)x_2=x_2^2\label{ringrel2}\end{align} 	
\ni since $d_2x_2$ is out of the table. Since the limit $E_\infty^{6,0}=\mbbz_2$, 
the differential $d_2^{4,1}$ has to double the generators. So that again applying the Leibniz rule and rearranging we impose,		
\begin{align}2x_6=d_2(ax_4)=(d_2a)x_4=x_2x_4=x_2^3.\label{ringrel3}\end{align}
The remaining differentials are also isomorphisms so we get, 
\begin{align} x_8  =d_2(ax_6)=(d_2a)x_6=x_2x_6\label{ringrel4} \end{align}
\begin{align}x_{10}=d_2(ax_8)=(d_2a)x_8=x_2x_8=x_2^2x_6\label{ringrel5}\end{align}

\ni So, combining the relations (\ref{ringrel2},\ref{ringrel3},\ref{ringrel4},\ref{ringrel5}) we can conclude that 
the cohomology ring of the Grassmann manifold is the following truncated polynomial ring for which $deg x_k=k$. 
\[ H^*( G_2^+\mbbr^7;\mbbz) \approx  
\mbbz[x_2,x_6]/\langle x_2^3-2x_6, x_6^2\rangle. \]	
	
\ni One can also compute the ring structure of $\mbbcp_5$ in a similar fashion 
using Serre spectral sequence as in \cite{botttu} to get  
$$H^*(\mbbcp_5;\mbbz)=\mbbz[x_2]/\langle x_2^6 \rangle$$
\ni where 
$x_2$ generates $H^2(\mbbcp_5;\mbbz)$ 
so that $x_2^k$ generates 
$H^{2k}(\mbbcp_5;\mbbz)$ additively. Once we compare these two ring structures, we 
decide that they are different, and hence these two spaces $G_2^+\mbbr^7$ and $\mbbcp_5$ are not homotopy equivalent. But that might not be very easy to show rigorously. 
That is why we choose another way to see it.

As we know that if two manifolds are homotopy equivalent, then they have the same homotopy groups. Exploiting this fact we can prove the following. 
			
\begin{thm}
$G_2^+\mathbb{R}^7$ is not homotopy equivalent to $\mathbb{CP}_5$. \label{G2R7nheCP5}
\end{thm}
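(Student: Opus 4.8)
The plan is to exploit the elementary fact that homotopy equivalent spaces have isomorphic homotopy groups in every degree, and to exhibit a single degree at which the two groups disagree. The natural candidate is $\pi_5$, the first degree in which the Stiefel manifold $V_2\mbbr^7$ carries nontrivial homotopy.

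First I would record $\pi_5(\mbbcp_5)$. Using the fibration $\mbb S^1\to\mbb S^{11}\to\mbbcp_5$ and its long exact sequence of homotopy groups, the vanishing of $\pi_k(\mbb S^1)$ for $k\ge 2$ forces $\pi_k(\mbbcp_5)\cong\pi_k(\mbb S^{11})$ for all $k\ge 3$. Since $\mbb S^{11}$ is $10$-connected, this gives $\pi_5(\mbbcp_5)\cong\pi_5(\mbb S^{11})=0$.

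Next I would compute $\pi_5(G_2^+\mbbr^7)$ through the fibration (\ref{G2R7fibration}). In its long exact sequence the fiber $\mbb S^1$ again has trivial homotopy in degrees $\ge 2$, so $\pi_k(V_2\mbbr^7)\cong\pi_k(G_2^+\mbbr^7)$ for every $k\ge 3$; it therefore suffices to determine $\pi_5(V_2\mbbr^7)$. Here I would use that $V_2\mbbr^7$ is the unit tangent bundle of $\mbb S^6$, fitting into the sphere bundle $\mbb S^5\to V_2\mbbr^7\to\mbb S^6$, whose long exact sequence reads
\begin{equation*}\pi_6(\mbb S^6)\xrightarrow{\ \partial\ }\pi_5(\mbb S^5)\to\pi_5(V_2\mbbr^7)\to\pi_5(\mbb S^6)=0,\end{equation*}
with the connecting homomorphism $\partial\colon\mbbz\to\mbbz$ given by multiplication by the Euler number $\chi(\mbb S^6)=2$. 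This yields $\pi_5(V_2\mbbr^7)\cong\mbbz/2\mbbz$, and hence $\pi_5(G_2^+\mbbr^7)\cong\mbbz_2$.

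Comparing the two outcomes, $\pi_5(G_2^+\mbbr^7)\cong\mbbz_2\neq 0=\pi_5(\mbbcp_5)$, so the spaces cannot be homotopy equivalent. The one delicate point is the identification of $\partial$ with multiplication by the Euler characteristic of $\mbb S^6$: one must invoke that for the sphere bundle of an oriented vector bundle the boundary map in the homotopy long exact sequence is given by the Euler class, which for the tangent bundle of the even sphere $\mbb S^6$ evaluates to $2$. Everything else reduces to routine diagram chasing in the long exact sequences, together with the high connectivity of the spheres and of $V_2\mbbr^7$.
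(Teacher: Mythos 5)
Your proposal is correct and follows essentially the same route as the paper: both compare $\pi_5$, computing $\pi_5(\mbbcp_5)=0$ from the Hopf fibration and $\pi_5(G_2^+\mbbr^7)\cong\pi_5(V_2\mbbr^7)=\mbbz_2$ from the circle bundle (\ref{G2R7fibration}). The only difference is that you derive $\pi_5(V_2\mbbr^7)=\mbbz_2$ yourself from the tangent sphere bundle $\mbb S^5\to V_2\mbbr^7\to\mbb S^6$ with boundary map multiplication by $\chi(\mbb S^6)=2$, whereas the paper simply cites this value; your argument is valid and makes the proof more self-contained.
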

\begin{proof}
We will be working on the classical Hopf fibration,
$$\mbb S^1 \to \mbb S^{11} \longrightarrow \mbbcp_5.$$
\ni The homotopy exact sequence of this fibration till 11-th level reads, 
$$\hspace{-3mm}0 \rightarrow \pi_{11}\,\mbb S^{11} \rightarrow \pi_{11} \,\mbbcp_5 \rightarrow 0\cdots
 0 \rightarrow \pi _ 3\,\mbbcp_5 \rightarrow 0 \rightarrow 0 \rightarrow \pi_2\,
\mbbcp_5 \rightarrow \pi_1\, \mbb S^1 \rightarrow 0 $$	
\ni	vanishing of the terms of which is provided by the vanishing of lower homotopy groups of the 11-sphere and higher homotopy groups of the circle. As a consequence 
we obtain the homotopy groups of the projective space as, 
$$\pi_{01234567891011}(\mbbcp_5) 
  =\left(0,0,\mathbb{Z},0,0,0,0,0,0,0,0,\mbbz\right)$$
\ni and the rest of the homotopy groups are determined by the 11-sphere's
$$\pi_k\,\mbbcp_5 = \pi_k \,\mbb S^{11}~~\tn{for}~~k\geq3.$$

\ni Next, recording 
the homotopy groups of the Stiefel manifold from \cite{hatchervb}, 
\begin{equation*}\pi _{012345}(V_2\mathbb{R}^7)=\left( 0,0,0,0,0,
		\mathbb{Z}_{2}\right) \end{equation*}
\ni the homotopy exact sequence of the Grassmannian fibration (\ref{G2R7fibration}) becomes the following
\bct $0 \to \mbbz_2 \to \pi_5\, G_2^+\mbbr^7 \to 
 0 \to 0 \to \pi_4\, G_2^+\mbbr^7 \to 
 0 \to 0 \to \pi_3\, G_2^+\mbbr^7 \to 
 0 \to 0 \to \pi_2\, G_2^+\mbbr^7 \to 
\pi_1\,\mbb S^1 \rightarrow 0 \rightarrow \pi _1\, G_2^+\mbbr^7 \to 0.$\ect
\ni This helps us to compute the homotopy groups up to fifth level as follows,  
\begin{equation}\pi_{012345}(G_2^+\mbbr^7)=(0,0,\mbbz,0,0,\mbbz_2).\label{homotopyG2R7}\end{equation}
First four fundamental groups of $\mathbb{CP}_5$ and $G_2^+\mathbb{R}^7$ are same but the fifth one is different. So these two spaces are not homotopy equivalent. 
		\end{proof}	


\section{Odd dimesional case}\label{secodd}					

In this section we will deal with the oriented real Grassmann manifold $G_2^+\mbbr^{2k+1}$ for $k\geq 2$. Similar calculations as in the previous section applies in the general odd dimensional case as well. We start with the Stiefel manifolds which have similar cohomology, 
$$H^*(V_2\mbbr^{2k+1};\mbbz)=\mbbz[x_{2k},x_{4k-1}]/\langle x_{2k}^2,x_{4k-1}^2,2x_{2k}, x_{2k}x_{4k-1} \rangle.$$
The cohomological Serre spectral sequence applied to the fibration 
\begin{equation*} \mbb S^1 \rightarrow V_2\mbbr^{2k+1} \longrightarrow G_2^+\mbbr^{2k+1} \label{G2R2k+1fibration} \end{equation*}
\ni has vanishing limit except for the entries $E_\infty^{0,0}=E_\infty^{4k,1}=\mbbz$ and $E_\infty^{2k,0}=\mbbz_2$. This forces the differentials $d_2^{i,1}$ 
in the second page to be isomorphism for all $i=-1\cdots 4k-1$ except for $2k-2$ which implies that our Grassmannian is a cohomological projective space $\mbbcp_{2k-1}$.  
Considering the limit of its image, differential on $E_\infty^{2k-2,1}$ has to be  multiplication by 2. Other relations similarly applies for this odd dimensional general case as well as the previous section. 
As a consequence we arrive at the following generalization. 

\begin{thm}\label{thmoddcuptruncated}	The cohomology ring of the Grassmann manifold is the following truncated polynomial ring for which $deg x_m=m$. 
\[ H^*( G_2^+\mbbr^{2k+1};\mbbz) \approx  
\mbbz[x_2,x_{2k}]/ \langle x_2^k-2x_{2k}, x_{2k}^2\rangle. \]\end{thm}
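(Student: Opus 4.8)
The plan is to run the general odd-dimensional version of the machine set up in Section~\ref{secmain}, namely the cohomological Serre spectral sequence of the circle bundle $\mbb S^1\to V_2\mbbr^{2k+1}\to G_2^+\mbbr^{2k+1}$. First I would record the additive cohomology of the base: reinterpreting the homology of \cite{atg2} through Poincar\'e duality gives $H^j(G_2^+\mbbr^{2k+1};\mbbz)\cong\mbbz$ for every even $j$ with $0\le j\le 4k-2$ and $0$ otherwise, since $\dim G_2^+\mbbr^{2k+1}=4k-2$. As the fiber is a circle, the $E_2$ page carries only the two rows $q=0,1$, each a copy of this base cohomology; I denote by $x_{2i}$ a generator of $E_2^{2i,0}$ (one for each $0\le i\le 2k-1$) and by $a$ the generator of $E_2^{0,1}=H^0(G;H^1(\mbb S^1;\mbbz))$, so that $a\,x_{2i}$ generates $E_2^{2i,1}$. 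The decisive point, exactly as in the typical case, is that the bottom row $E_2^{\ast,0}$ carries the genuine cup-product ring $H^\ast(G_2^+\mbbr^{2k+1};\mbbz)$, so proving the theorem reduces to pinning down the multiplicative relations among the $x_{2i}$.

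Next I would read the differentials off the limit. With only two nonzero rows the sole possibly nontrivial differential is $d_2\colon E_2^{p,1}\to E_2^{p+2,0}$, and $E_3=E_\infty$. The filtration bookkeeping of Section~\ref{secmain}, fed now by the Stiefel cohomology $H^\ast(V_2\mbbr^{2k+1};\mbbz)=\mbbz[x_{2k},x_{4k-1}]/\langle x_{2k}^2,x_{4k-1}^2,2x_{2k},x_{2k}x_{4k-1}\rangle$, shows that the limit is concentrated in $E_\infty^{0,0}=\mbbz$, $E_\infty^{2k,0}=\mbbz_2$ and $E_\infty^{4k-2,1}=\mbbz$. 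Since a surjection $\mbbz\to\mbbz$ is an isomorphism, the vanishing of $E_\infty^{2i,0}$ for $1\le i\le 2k-1$ with $i\ne k$ forces each $d_2^{2i-2,1}$ to be an isomorphism, while the surviving $\mbbz_2$ in degree $2k$ forces the single differential $d_2^{2k-2,1}$ to be multiplication by $2$; the top class $a\,x_{4k-2}$ maps into $H^{4k}=0$ and so survives to give $E_\infty^{4k-2,1}=\mbbz$.

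Finally I would extract the ring by the Leibniz rule (\ref{leibnizofsss}). After normalizing signs so that $d_2a=x_2$, each $x_{2i}$ is a permanent cycle with $d_2x_{2i}=0$ (its target lies off the table), whence $d_2(a\,x_{2i})=(d_2a)x_{2i}=x_2x_{2i}$. For $0\le i\le k-2$ the differential $d_2^{2i,1}$ is an isomorphism, so after re-choosing signs $x_{2i+2}=x_2x_{2i}$, giving $x_{2i}=x_2^i$ for $0\le i\le k-1$; likewise for $k\le i\le 2k-2$ the isomorphisms give $x_{2i+2}=x_2x_{2i}$, so that $x_2$ and $x_{2k}$ generate the whole ring. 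At the anomalous index $i=k-1$ the differential is multiplication by $2$, hence $x_2^k=x_2x_{2k-2}=2x_{2k}$, the first relation. The second is purely dimensional: $x_{2k}^2$ would live in $H^{4k}(G_2^+\mbbr^{2k+1})=0$ since $4k>4k-2=\dim G_2^+\mbbr^{2k+1}$, so $x_{2k}^2=0$. This yields $H^\ast(G_2^+\mbbr^{2k+1};\mbbz)\approx\mbbz[x_2,x_{2k}]/\langle x_2^k-2x_{2k},\,x_{2k}^2\rangle$, and a count of monomials confirms the additive ranks agree with the $E_2$ page.

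I expect the main obstacle to be not any single calculation but the careful isolation of the one anomalous differential: one must be certain that the lone $\mbbz_2$ in the Stiefel cohomology is produced by exactly the differential $d_2^{2k-2,1}$ being multiplication by $2$, with every other $d_2$ on the relevant stretch an isomorphism and the top differential $d_2^{4k-2,1}$ the zero map. The sign ambiguities in the Leibniz rule also demand attention, but, as in the case $n=7$, they are harmless and can be absorbed into the choices of generators.
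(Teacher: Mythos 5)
Your proposal is correct and follows essentially the same route as the paper: the cohomological Serre spectral sequence of the circle bundle over the Grassmannian, the known Stiefel cohomology forcing every $d_2^{2i,1}$ to be an isomorphism except the one landing in degree $2k$ (which is multiplication by $2$) and the top one (which is zero), and the Leibniz rule plus the dimension bound $4k>4k-2$ to extract the relations $x_2^k=2x_{2k}$ and $x_{2k}^2=0$. Your write-up is in fact more explicit than the paper's own Section~\ref{secodd}, which compresses these steps by reference to the typical case $n=7$; note also that where the paper writes $E_\infty^{4k,1}=\mbbz$ you correctly place this surviving class at $E_\infty^{4k-2,1}$.
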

	
\ni This ring is different from that of the complex projective space which is 
$$H^*(\mbbcp_{2k-1};\mbbz)=\mbbz[x_2]/ \langle x_2^{2k}\rangle.$$
\ni On the other hand one can proceed in the homotopy groups direction as well. 
The Stiefel manifold $V_2\mbbr^{2k+1}$ is 2k-2 connected and the 2k-1-th homotopy group is $\mbbz_2$. Using this on the homotopy exact sequence of the circle bundle gives the non-zero homotopy groups $\mbbz$ and $\mbbz_2$ at the levels 2 and 2k-1 of the Grassmannian. 
The rest of the homotopy groups are again trivial upto these levels. 
Similarly using Hopf fibration homotopy exact sequence, the 
proof of the Theorem \ref{G2R7nheCP5} also suggest the following corollary.  

\beg{cor} The homotopy groups of the complex projective spaces are 
computed from the spheres as follows. 
$$\pi_j\,\mbbcp_n =\left\{\begin{array}{ll}
 0    & j < 2 \\ 
\mbbz & j = 2   \\ 
\pi_j\, \mbb S^{2n+1}  & j > 2
\end{array}   \right.$$\end{cor}

\ni So that first two nontrivial homotopy groups of $\mbbcp_{2k-1}$ lie at the levels 2 and 4k-1 which distinguish the homotopy groups of two manifolds at the level 2k-1.

\section{Even dimensional case}\label{seceven}					

The even dimensional case varies widely from the odd one in our situation. 
Major reason for diversity stems 
from the topology of Stiefel manifolds, 
cohomology of which can be extracted from \cite{hatcherat} as follows. 
$$H^*(V_2\mbbr^{2k};\mbbz)=
\mbbz[x_{2k-2},x_{2k-1}]/\langle x_{2k-2}^2,x_{2k-1}^2,x_{2k-2}x_{2k-1}\rangle.$$ 

\ni To be able to compute the cohomology of the Grassmannian we set up the cohomological Serre spectral sequence for the fibration, 
\begin{equation*} \mbb S^1 \rightarrow V_2\mbbr^{2k} \longrightarrow G_2^+\mbbr^{2k}. \label{G2R2kfibration} \end{equation*}
\ni The zeros coming from the Stiefel manifold in the limit with the top and 
bottom cohomology provides zeros at the odd levels and $\mbbz$'s at the even levels 
except for the mid-level. See Table \ref{table:E2CohomG28} for a low dimensional case. 
\beg{table}[ht]
\caption{ {\em The second page of the cohomological Serre spectral sequence for $G_2^+\mbb R^8$.}}
\bct{
$\begin{array}{cc|c|c|c|c|c|c|c|c|c|c|c|c|c|} \cline{3-15}
    &1&\mbbz a&0&\mbbz ax_2&0&\mbbz ax_4&0&\mbbz ax_6\oplus\mbbz ay_6&0&\mbbz ax_8&0&\mbbz ax_{10}&0&\mbbz ax_{12} \\ \cline{3-15}
E_2~~~ &0&\mbbz 1&0&\mbbz x_2&0&\mbbz  x_4&0&\mbbz x_6\oplus\mbbz y_6&0&\mbbz x_8&0&\mbbz x_{10}&0&\mbbz x_{12}\\ \cline{3-15}
& \multicolumn{1}{c}{ } & \multicolumn{1}{c}{0} 
& \multicolumn{1}{c}{1} & \multicolumn{1}{c}{2} 
& \multicolumn{1}{c}{3} & \multicolumn{1}{c}{4}
& \multicolumn{1}{c}{5} & \multicolumn{1}{c}{6} 
& \multicolumn{1}{c}{7} & \multicolumn{1}{c}{8}
& \multicolumn{1}{c}{9} & \multicolumn{1}{c}{10}
& \multicolumn{1}{c}{11} & \multicolumn{1}{@{}c@{}}{12} \\  
& \multicolumn{1}{c}{ } & \multicolumn{1}{c}{ } 
& \multicolumn{1}{c}{ } & \multicolumn{1}{c}{ } 
& \multicolumn{1}{c}{ } & \multicolumn{1}{c}{ }
& \multicolumn{1}{c}{ } & \multicolumn{1}{c}{2k-2} 
& \multicolumn{1}{c}{ } & \multicolumn{1}{c}{ }
& \multicolumn{1}{c}{ } & \multicolumn{1}{c}{ }
& \multicolumn{1}{c}{ } & \multicolumn{1}{@{}c@{}}{ }
\end{array}$   
\setlength{\unitlength}{1mm}
\begin{picture}(0,0)
\put(-12,17.5){\vector(2,-1){8}}
\end{picture} }
\ect\label{table:E2CohomG28}\end{table}

\ni We figure out that $E_\infty^{2k-3,1}=E_\infty^{2k-1,0}=0$ from our computation of the second page.  
Since $\mbbz=H^{2k-2}(V;\mbbz)=F^{0,2k-2}\supset F^{1,2k-3}\supset\cdots$ imposing 
$E_\infty^{0,2k-2}=\cdots=E_\infty^{2k-3,1}=0$ yields that 
$\mbbz=F^{1,2k-3}=\cdots=F^{2k-2,0}$ last term of which is equal to $E_\infty^{2k-2,0}$.

Similarly we have $\mbbz=H^{2k-1}(V;\mbbz)=F^{0,2k-1}\supset F^{1,2k-2}\supset\cdots$, imposing 
$E_\infty^{0,2k-1}=\cdots=E_\infty^{2k-3,2}=0$ yields that 
$\mbbz=F^{1,2k-3}=\cdots=F^{2k-2,1}$. But we know that  
$F^{2k-1,0}=0$ from the limit, so that  $E_\infty^{2k-2,1}=F^{2k-2,1}/F^{2k-1,0}=\mbbz$. The same argument is valid to prove $E_\infty^{4k-4,1}=\mbbz$ 
since we know $H^{4k-4}(V;\mbbz)=\mbbz$ and as an outside of territory limit $E_\infty^{4k-3,0}=0$. 
Combining these results we obtain Table \ref{table:EinftyCohomG28} as the 
limit of the spectral sequence. 

\beg{table}[ht]
\caption{{\em The limiting page of cohomological Serre spectral sequence for $G_2^+\mbb R^8$.}}
\bct{
$\begin{array}{cc|c|c|c|c|@{}c@{}|@{}c@{}|c|c|c|c|c|c|c|c} \cline{3-15}
           &1&0&0&0&0&0
           &\tcb 0&\tcb \mbbz&0&0&0&0&0&\tcb\mbbz \\ \cline{3-15}
E_\infty~~~&0&\mbbz&0&0&0&   0   &0
&\tcb{\mbbz}&\tcb{0}&0&0&0&0&0 & \tcb 0 \\ \cline{3-15}
& \multicolumn{1}{c}{ } & \multicolumn{1}{c}{0} 
& \multicolumn{1}{c}{1} & \multicolumn{1}{c}{2} 
& \multicolumn{1}{c}{3} & \multicolumn{1}{c}{4}
& \multicolumn{1}{c}{5} & \multicolumn{1}{c}{6} 
& \multicolumn{1}{c}{7} & \multicolumn{1}{c}{8}
& \multicolumn{1}{c}{9} & \multicolumn{1}{c}{10}
& \multicolumn{1}{c}{11} & \multicolumn{1}{@{}c@{}}{12} \\  
\end{array}$   }\ect \label{table:EinftyCohomG28}  \end{table}

\ni Let $H=H^{2k-2}(G;\mbbz)$ be the mid-level cohomology. Then since the differential  $d_2^{2k-4,1}$ is injective, we have an isomorphic copy of $\mbbz$ inside $H$. 
Since the limit $E_\infty^{2k-2,0}=\mbbz$ we have an isomorphism $H/d(\mbbz) \approx \mbbz$. This isomorphism will reveal the result. First of all $H$ contains no torsion. If there were a torsion subgroup then the intersection 
$T\cap d(\mbbz)$ has to be trivial, otherwise $d(\mbbz)$ should contain nontrivial elements of finite order which is impossible. So the torsion $T$ remains in the quotient. This means that it is a subgroup $T<\mbbz$ of the 
quotient which is free so $T$ has to be zero. So $H$ is free. It remains 
to determine its rank. If $rk H=1$ then the quotient has to be finite which is not the case. If the $rk H>2$ then one can show that the quotient has rank greater than 1. So the rank must be 2.

\ni To be able to figure out the cohomology at the mid-level alternatively, we need to proceed with auxiliary tools. 
Poincar\'e polynomial of the Grassmannian in the even case is given by \cite{ghv,gluckmackenziemorgan}, 
$$p_{G_2^+\mbbr^{2k}}(t)=1+t^2+t^4+\cdots +t^{2k-4}+ 2t^{2k-2}+t^{2k}+\cdots + t^{4k-4}.$$
\ni This shows that the free part has rank two. An application of the universal 
coefficient theorem links the torsion to a lower level so that
$$T^{2k-2}=\tn{Ext}(H_{2k-3}(G;\mbbz),\mbbz)=T_{2k-3}=0.$$ 
\ni This finally completes the Table \ref{table:E2CohomG28}. 
   
Analysing the product structure with similar techniques as in the previous section does not yield the cohomology ring in the even case.
We only get the relations $x_2x_l=x_{l+2}$ for $l\neq 2k-4,2k-2$. 
To figure out the rest of the relations we need to understand the intrinsic 
structure of the ring. In what follows $E$ and $F$ will denote $E_2^{2k}$ and $F_2^{2k}$, the canonical bundles over the Grassmann manifold $G_2^+\mbbr^{2k}$.  
They are obtained by taking the plane corresponding to a point and its orthogonal complement(or quotient) to produce vector bundles of rank 2 and 2k-2 over our Grassmannian. 
We collect the related results of \cite{shizhou} here as follows.


\beg{thm}\label{shizhouthm}
We have the following relations in the cohomology of the 

Grassmann manifold $G_2^+\mbbr^{2k}$.\\

(a) $_{\mbbcp_m}\int i^* e^m =\, _{\overline{\mbbcp}_m}\int i^* e^m = (-1)^m$.\\

(b) $_{\mbbcp_{k-1}}\int i^*eF =\, -\, _{\overline{\mbbcp}_{k-1}}\int i^*eF=1$.\\

(c) $[\mbbcp_m] \in H_{2m}(G;\mbbz)$ and $e^m \in H^{2m}(G;\mbbz)$ are the generators 

for $m<k-1$.\\

(d) $[\mbb G(2,m+2)] \in H_{2m}(G;\mbbz)$ and 
$e^m / 2 \in H^{2m}(G;\mbbz)$ are the generators 

for $m>k-1$.\\

(e) $[\mbbcp_{k-1}],[\ol{\mbbcp}_{k-1}] \in H_{2k-2}(G;\mbbz)$ and 
$( (-e)^{k-1}\pm eF ) /2 \in H^{2k-2}(G;\mbbz)$

are the generators. 
They also correspond to each other by Poincar\' e duality. \\

\end{thm}

\ni Using these characteristic classes and integrals we will be able to figure out the generators and relations in our Grassmannian. 
Now we are ready to compute the cohomology ring. 

\begin{thm}\label{thmevencuptruncated}	The cohomology ring of the Grassmann manifold is the following truncated polynomial ring for which $deg x_m=m$. 
\[\hspace{-1cm}
H^*( G_2^+\mbbr^{2k};\mbbz) \approx  
{\mbbz[x_2,x_{2k-2},y_{2k-2},x_{2k}] \tn{\Large $/$} \hspace{-1.5mm} \left\langle 
\begin{array}{c} 
x_{2k-2}+y_{2k-2}+(-1)^k x_2^{k-1}~,~ 
2x_{2k-2}^2-x_2^{2k-2} \\ [1.5\jot] 
2x_2x_{2k-2}+(-1)^k x_2^k~,~2y_{2k-2}^2 - x_2^{2k-1}\\ [1.5\jot]
x_{2k-2}y_{2k-2}~,~x_{2k}^2 ~,~ x_2^{2k-1}
\end{array}  \right\rangle  }. \]
\end{thm}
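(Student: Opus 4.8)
The plan is to pin down the ring by combining three inputs: the additive groups already extracted from the limiting page, the partial multiplicative relations $x_2 x_l = x_{l+2}$ for $l\neq 2k-4,2k-2$ coming from the Leibniz rule on the spectral sequence, and the intersection-theoretic data collected in Theorem \ref{shizhouthm}. First I would fix the generators. Take $x_2=e$, the Euler class of the tautological bundle $E$, which generates $H^2$. For the rank-two middle group $H^{2k-2}$, part (e) of Theorem \ref{shizhouthm} hands us the two generators $x_{2k-2}=((-e)^{k-1}+eF)/2$ and $y_{2k-2}=((-e)^{k-1}-eF)/2$, Poincar\'e dual to the linear cycles $[\mbbcp_{k-1}]$ and $[\ol{\mbbcp}_{k-1}]$. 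Just above the middle, part (d) says $e^k/2$ generates $H^{2k}$, so I set $x_{2k}=e^k/2$; this is precisely what forces a separate degree-$2k$ generator, exactly as in the odd case.

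Relation $x_{2k-2}+y_{2k-2}+(-1)^k x_2^{k-1}=0$ is then immediate from the definitions, since the $eF$ terms cancel and $(-e)^{k-1}=(-1)^{k-1}x_2^{k-1}$. The truncation $x_2^{2k-1}=0$ and $x_{2k}^2=0$ hold for dimension reasons, both classes lying above the top degree $4k-4$. The substance is the three middle products, which land in $H^{4k-4}\cong\mbbz$ and so are detected by integration against the fundamental class. Using Poincar\'e duality I would rewrite $\int_G x_{2k-2}^2=\int_{\mbbcp_{k-1}} i^* x_{2k-2}$ and expand through parts (a) and (b): the $i^*e^{k-1}$ term contributes $(-1)^{k-1}\cdot(-1)^{k-1}=1$ and the $i^*eF$ term contributes $1$, giving $\int_G x_{2k-2}^2=1$; the same computation over $\ol{\mbbcp}_{k-1}$, where (b) carries the opposite sign, yields $\int_G x_{2k-2}y_{2k-2}=0$ and $\int_G y_{2k-2}^2=1$. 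Since $\int_G x_2^{2k-2}=2$ (the degree of the quadric, equivalently $(x_{2k-2}+y_{2k-2})^2$ integrated), comparing integrals over the rank-one group $H^{4k-4}$ forces $2x_{2k-2}^2=x_2^{2k-2}$, $2y_{2k-2}^2=x_2^{2k-2}$, and $x_{2k-2}y_{2k-2}=0$.

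The remaining relation $2x_2 x_{2k-2}+(-1)^k x_2^k=0$ sits in $H^{2k}$, which is only rank one, so I cannot integrate it directly; instead I would multiply by $x_2^{k-2}$ to push it into top degree, verify it integrates to zero there, and then observe that multiplication by $x_2^{k-2}\colon H^{2k}\to H^{4k-4}$ sends $x_{2k}$ to $x_2^{2k-2}/2$, hence is injective, so the relation already holds in $H^{2k}$. Equivalently this is the statement $x_2 x_{2k-2}=(-1)^{k+1}x_{2k}$ together with $x_2^k=2x_{2k}$ from part (d). It then remains to prove the listed relations are \emph{complete}, i.e.\ generate the full ideal. For this I would form the tautological surjection from the presented ring onto $H^*(G_2^+\mbbr^{2k};\mbbz)$ and check it is an isomorphism degree by degree, comparing the free rank and torsion of each graded piece with the groups read off the limiting page: the reductions $x_2 x_l=x_{l+2}$ below the middle together with $x_2^k=2x_{2k}$ above it show the quotient is additively generated by one element in each degree (two in the middle), matching the known Betti numbers and the vanishing torsion established earlier via universal coefficients.

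I expect the real difficulty to be twofold. The more mechanical part is orientation and sign bookkeeping: every $(-1)^k$ in the presentation traces back to conventions hidden in parts (a), (b) and (e), and a single mis-signed restriction propagates through all the middle products. The deeper obstacle is completeness. Knowing that relations hold is easy; proving that \emph{no further} relations are needed requires the additive comparison above, and the delicate point is the middle degree, where I must certify that $\{x_{2k-2},y_{2k-2}\}$ is an \emph{integral} basis of $H^{2k-2}$, not merely a rational one. This is exactly the content of the halved classes in Theorem \ref{shizhouthm}(e), and it is the reason the factors of $2$ in relations two through four cannot be removed.
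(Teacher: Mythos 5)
Your proposal is correct and follows essentially the same route as the paper: the same generators $e$, $((-e)^{k-1}\pm eF)/2$, $e^k/2$ taken from Theorem \ref{shizhouthm}, the same evaluation of the middle-degree products by integrating against $[\mbbcp_{k-1}]$ and $[\ol{\mbbcp}_{k-1}]$ via parts (a) and (b), and the same dimension/commutativity argument for the remaining relations (you are somewhat more explicit than the paper about verifying that the listed relations are \emph{complete} by an additive degree-by-degree comparison, which the paper leaves implicit). The one point of divergence is that you obtain $2y_{2k-2}^2=x_2^{2k-2}$ whereas the statement reads $2y_{2k-2}^2-x_2^{2k-1}$; your version is the consistent one, since the paper's own expansion of $y_{2k-2}^2$ evaluates to $x_2^{2k-2}/2$ and the printed exponent $2k-1$ would force $y_{2k-2}^2$ to be a $2$-torsion element of a torsion-free group while integrating to $1$.
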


\beg{proof}
The cohomology of the manifold $G_2^+\mbbr^{2k}$ is additively generated by the elements, 
$$1~,~e~,~e^2 \, \cdots ~ e^{k-2} ~,~( (-e)^{k-1}\pm eF ) /2~,~ e^k/2 
~\cdots ~e^{2k-2}/2.$$
\ni Setting up the variables,
\beg{eqnarray}
x_2      & := & e \\ [2\jot]
2x_{2k-2} & := & (-e)^{k-1}+eF \label{variablex2k-2}\\ [2\jot]  
2y_{2k-2} & := & (-e)^{k-1}-eF \label{variabley2k-2}\\ [2\jot]
2x_{2k}   & := & e^{k-1}
\end{eqnarray}

\ni  Adding up the variables (\ref{variablex2k-2}) and (\ref{variabley2k-2}) we get the relation,
\begin{align}x_{2k-2}+y_{2k-2}=(-1)^{k-1}x_2^{k-1}.\label{relation1}\end{align}
\ni This provides the only relation among all these three variables. 
{\large
$${\renewcommand{\arraystretch}{2} 
\begin{array}{rcl} 
_{G_2^+\mbbr^{2k}}\int
x_{2k-2}^2 
& = & {1\over 4}\, _{G_2^+\mbbr^{2k}}\int \{ e^{2k-2}+e^2F+2(-e)^{k-1} \}  \\ 
& = & {1\over 4}\, 
 \{ 2 +\, _{\mbbcp_{k-1}-\overline{\mbbcp}_{k-1}}\int eF + 2(-e)^{k-1} \}  \\
& = & {1\over 4}\,  \{ 2 + (1--1) + 0 \}=1. \\
\end{array} }$$  
}
\ni by Theorem \ref{shizhouthm}. Since the rank of the cohomology in this dimension is one, this implies that 
\begin{align}2x_{2k-2}^2=x_2^{2k-2}.\label{relation2}\end{align}
\ni Multiplying the variables, we compute 
{\large
$${\renewcommand{\arraystretch}{2} 
\begin{array}{rcl} 
_{\mbb G(2,k+2)} 
\int
x_2 x_{2k-2} 
& = & {1\over 2}\, _{\mbb G(2,k+2)}
\int \{ e ( (-e)^{k-1}+eF ) \}  \\ 
& = & {1\over 2}\, _{G_2^+\mbbr^{2k}}
\int \{ e^{k-2} e ( (-e)^{k-1}+eF ) \}  \\
& = & {1\over 2}\, _{G_2^+\mbbr^{2k}}
\int \{ (-1)^{k-1} e^{2k-2} + e^{k-1}eF  \}  \\
& = & (-1)^{k-1} + {1\over 2}\, _{G_2^+\mbbr^{2k}}
\int \{ e^{k-1}eF  \}  \\
& = & (-1)^{k-1} + {1\over 2}\, _{\mbbcp_{k-1}-\overline{\mbbcp}_{k-1}}\int e^{k-1} \\
& = & (-1)^{k-1}
\end{array} }$$  
}
\ni which is the coefficient of the generator at the 2k-th level, so that we get the relation, 
\begin{align}x_2x_{2k-2}=(-1)^{k-1}e^k/2=2^{-1} (-1)^{k-1} x_2^k.\label{relation3}\end{align}
\ni Square of the remaining variable can be decomposed by virtue of the 
relation (\ref{relation1}) and also combining with the relations (\ref{relation2},\ref{relation3}) we have computed, 


{\large
\beg{eqnarray}
y_{2k-2}^2 
& = & ((-1)^{k-1}x_2^{k-1}-x_{2k-2})^2 \nonumber\\ [3\jot] 
& = & x_2^{2k-2}+x_{2k-2}^2-2(-1)^{k-1}x_2^{k-1}x_{2k-2} \nonumber\\ [3\jot]
& = & x_2^{2k-2}+x_2^{2k-2}/2 - 2(-1)^{k-1}x_2^{k-2} 2^{-1}(-1)^{k-1}x_2^k\nonumber\\ [3\jot]  
& = & x_2^{2k-1}/2 \label{relation4}
\end{eqnarray} }
\ni Finally we will compute the product in the middle level. 
{\large \beg{eqnarray}
_{G_2^+\mbbr^{2k}}\int x_{2k-2}y_{2k-2}  
& = & {1\over 4}\, _{G_2^+\mbbr^{2k}}\int \{ e^{2k-2}-e^2F \} \nonumber\\ [3\jot] 
& = & {1\over 2} - {1\over 4}\, _{\mbbcp_{k-1}-\overline{\mbbcp}_{k-1}}\int eF \nonumber\\ [3\jot]
& = & 0. \label{relation5} 
\end{eqnarray} }
\ni The rest of the relations are produced by either vanishing above the dimension or commutativity of forms.  
\end{proof}

\ni Note that in the lowest dimension when k=2, it is a nice exercise for the reader  
to establish an isomorphism with the cohomology of product of 2-spheres,  
$$H^*(\mbb S^2\times \mbb S^2)=\mbbz[x_2,y_2]/\langle x_2^2,y_2^2,x_2y_2-y_2x_2\rangle.$$

Considering the homotopy groups, the Stiefel manifold $V_2\mbbr^{2k}$ is 2k-3 connected and the 2k-2-nd homotopy 
group of it is $\mbbz$. The homotopy exact sequence of the circle bundle 
gives the non-zero homotopy group $\mbbz$ at the levels 2 and 2k-2 of the Grassmannian.  The rest of the groups are trivial upto these levels.  
On the other hand the first two nontrivial homotopy groups of $\mbbcp_{2k-2}$ 
are at the levels 2 and 4k-3 which distinguish the homotopy groups at the level 2k-2.




\bigskip


\bibliography{cp5}{}

\begin{thebibliography}{GMM95}

\bibitem[AK16]{atg2}
Selman Akbulut and Mustafa Kalafat.
\newblock Algebraic topology of {$G_2$} manifolds.
\newblock {\em Expo. Math.}, 34(1):106--129, 2016.

\bibitem[BT82]{botttu}
Raoul Bott and Loring~W. Tu.
\newblock {\em Differential forms in algebraic topology}, volume~82 of {\em Graduate Texts in Mathematics}.
\newblock Springer-Verlag, New York-Berlin, 1982.

\bibitem[GHV76]{ghv}
Werner Greub, Stephen Halperin, and Ray Vanstone.
\newblock {\em Connections, curvature, and cohomology}.
\newblock Academic Press [Harcourt Brace Jovanovich, Publishers], New York-London, 1976.
\newblock Volume III: Cohomology of principal bundles and homogeneous spaces, Pure and Applied Mathematics, Vol. 47-III.

\bibitem[GMM95]{gluckmackenziemorgan}
Herman Gluck, Dana Mackenzie, and Frank Morgan.
\newblock Volume-minimizing cycles in {G}rassmann manifolds.
\newblock {\em Duke Math. J.}, 79(2):335--404, 1995.

\bibitem[Hat02]{hatcherat}
Allen Hatcher.
\newblock {\em Algebraic Topology}.
\newblock Cambridge University Press, Cambridge, 2002.

\bibitem[Hat03]{hatchervb}
Allen Hatcher.
\newblock {\em {V}ector Bundles and K-Theory}.
\newblock Book available online at ~ \texttt{http://www.math.cornell.edu/$\sim$hatcher/}, 2003.

\bibitem[Hat04]{hatcherss}
Allen Hatcher.
\newblock {\em {S}pectral Sequences in Algebraic Topology}.
\newblock Available at \texttt{http://www.math.cornell.edu/$\sim$hatcher/}, 2004.

\bibitem[Kal18]{coassf}
Mustafa Kalafat.
\newblock Free immersions and panelled web 4-manifolds.
\newblock {\em Internat. J. Math.}, 29(14):1850087, 12, 2018.

\bibitem[KY21]{slag}
Mustafa Kalafat and Ey{\"u}p Yal\c{c}{\i}nkaya.
\newblock Algebraic topology of special {L}agrangian manifolds.
\newblock {\em Indag. Math. (N.S.)}, 32(3):579--597, 2021.

\bibitem[Sal89]{redbook}
Simon Salamon.
\newblock {\em Riemannian geometry and holonomy groups}, volume 201 of {\em Pitman Research Notes in Mathematics Series}.
\newblock Longman Scientific \& Technical, Harlow; copublished in the United States with John Wiley \& Sons, Inc., New York, 1989.

\bibitem[Sat99]{hajimesato}
Hajime Sato.
\newblock {\em Algebraic topology: an intuitive approach}, volume 183 of {\em Translations of Mathematical Monographs}.
\newblock American Mathematical Society, Providence, RI, 1999.
\newblock Translated from the 1996 Japanese original by Kiki Hudson, Iwanami Series in Modern Mathematics.

\bibitem[SZ14]{shizhou}
Jin Shi and Jianwei Zhou.
\newblock Characteristic classes on {G}rassmannians.
\newblock {\em Turkish J. Math.}, 38(3):492--523, 2014.

\end{thebibliography}
\bibliographystyle{alphaurl}

\bigskip

{\small
\begin{flushleft}
\textsc{Mathematisches Institut, 
Rheinische Friedrich-Wilhelms-Universität Bonn
Endenicher Allee 60, 
D-53115 Bonn, 
Germany
.}\\ \vspace{2mm}
\textit{E-mail address:} 
\texttt{\textbf{kalafat@\,math.uni-bonn.de
}}
\end{flushleft}
}

\end{document}